\newtheorem*{thm*}{Theorem}
\newcommand{\ff}{{\mathcal F}}
\newcommand{\aaa}{{\mathcal A}}
\newcommand{\G}{{\mathcal G}}
\newcommand{\h}{{\mathcal H}}
\newcommand{\T}{{\mathcal T}}
\newtheorem*{cla*}{Claim}
\newcommand{\bb}{{\mathcal B}}
\newcommand{\ccc}{{\mathcal C}}
\newtheorem{thm}{Theorem}
\newtheorem{gypo}{Conjecture}
\newtheorem{cla}[thm]{Claim}
\newtheorem{cor}[thm]{Corollary}
\date{}
\newtheorem{prop}[thm]{Proposition}
\newtheorem{obs}[thm]{Observation}
\newtheorem{defn}[thm]{Definition}
\date{}
\title{Incompatible intersection properties}
\author{Peter Frankl}\address{R\'enyi Institute, Budapest, Hungary; Email: {\tt peter.frankl@gmail.com}}
\author{Andrey Kupavskii}
\address{University of Birmingham,
Moscow Institute of Physics and Technology and Caucasus Mathematical Center, Adyghe State University; Email: {\tt kupavskii@ya.ru}.} \thanks{The research of the second author was partially supported by the  EPSRC grant no. EP/N019504/1.}
\begin{document}
\maketitle
\begin{abstract} Let $\ff\subset 2^{[n]}$ be a family in which any three sets have non-empty intersection and any two sets have at least $38$ elements in common. The nearly best possible bound $|\ff|\le 2^{n-2}$ is proved. We believe that $38$ can be replaced by $3$ and provide a simple-looking conjecture that would imply this.\end{abstract}
\section{Introduction}
Let $[n]:=\{1,\ldots, n\}$ be the standard $n$-element set and $2^{[n]}$ its power set. Subsets of $2^{[n]}$ are called {\it families}.
\begin{defn}
  For positive integers $r,t$, where $r\ge 2$, a family $\ff\subset 2^{[n]}$ is called {\it $r$-wise $t$-intersecting}, if $|F_1\cap \ldots \cap F_r|\ge t$ for all $F_1,\ldots, F_r\in\ff$.
\end{defn}
In the case $t=1$, instead of $1$-intersecting the term {\it intersecting} is used.
Arguably the simplest result in extremal set theory is the following.

\begin{prop}
  If $\ff\subset 2^{[n]}$ is $2$-wise intersecting then \begin{equation}\label{eqnew1}
                                                          |\ff|\le 2^{n-1}.
                                                        \end{equation}
\end{prop}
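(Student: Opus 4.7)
The plan is to use the classical complement-pairing argument. First I would partition the power set $2^{[n]}$ into $2^{n-1}$ disjoint pairs, where each set $F\subseteq [n]$ is paired with its complement $[n]\setminus F$. This is clearly a partition since $F\ne [n]\setminus F$ whenever $n\ge 1$, and $([n]\setminus F)^c=F$.

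Next I would observe that $F\cap ([n]\setminus F)=\emptyset$, so by the assumption that $\ff$ is $2$-wise intersecting, at most one set from each such pair can belong to $\ff$. Since there are exactly $2^{n-1}$ pairs, this yields $|\ff|\le 2^{n-1}$, which is inequality \eqref{eqnew1}.

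There is essentially no obstacle: the only subtlety is the trivial boundary case $n=0$, which can be handled separately (the empty family and $\{\emptyset\}$ satisfy the bound vacuously or directly). The bound is tight, as witnessed for instance by the star $\ff=\{F\subseteq [n]: 1\in F\}$, so no refinement of the argument is needed.
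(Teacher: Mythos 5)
Your complement-pairing argument is correct and complete: the $2^{n-1}$ pairs $\{F,[n]\setminus F\}$ partition $2^{[n]}$, and an intersecting family contains at most one set from each pair. The paper does not supply a proof of this proposition at all (it only cites Erd\H os--Ko--Rado), and your argument is precisely the standard one for this classical fact, so there is nothing to reconcile.
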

The above result is a small part of the classical Erd\H os-Ko-Rado paper \cite{EKR}. Since the family $\ff_0:=\{F\subset 2^{[n]}: 1\in F\}$ is $r$-wise intersecting for every $r\ge 2$, \eqref{eqnew1} is the best possible bound for $r\ge 3$ as well. The family $\ff_0$ is usually called trivially intersecting.

Let us call a family {\it non-trivial} if $\bigcap_{F\in \ff} F=\emptyset$.
The following result is one of the early gems in extremal set theory.
\begin{thm}[Brace-Daykin \cite{BD}] Suppose that $\ff\subset 2^{[n]}$ is $r$-wise intersecting and non-trivial. Then \begin{equation}\label{eqnew2}
                    |\ff|\le \frac{r+2}{2^r}2^{n-1}.
                  \end{equation}
\end{thm}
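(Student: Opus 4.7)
My plan is to use the shifting technique together with a trace argument on an $(r{+}1)$-element set, aiming to embed (the upset generated by) $\ff$ into the extremal family
$$\ff^* := \{G \subseteq [n] : |G \cap [r+1]| \ge r\}, \quad |\ff^*| = (r+1)\cdot 2^{n-r-1} + 2^{n-r-1} = (r+2)\cdot 2^{n-r-1} = \tfrac{r+2}{2^r}\cdot 2^{n-1}.$$
The desired bound will follow from the containment $\ff \subseteq \ff^*$.

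First, I would replace $\ff$ by the upset it generates; this preserves both $r$-wise intersecting and non-triviality, and only enlarges $|\ff|$. Next, I apply the shifting operators $S_{ij}$, $i<j$, which preserve $r$-wise intersecting. Since a shift may create a common element and thereby trivialize the family, I perform only those shifts compatible with non-triviality, stopping at a ``maximally compressed'' non-trivial upset. Using non-triviality, for every $i\in[r+1]$ I extract $F_i\in\ff$ with $i\notin F_i$; by the upset property, replacing $F_i$ with $F_i\cup([r+1]\setminus\{i\})$ arranges $F_i\cap[r+1]=[r+1]\setminus\{i\}$.

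The heart of the argument is the trace claim: every $G\in\ff$ satisfies $|G\cap[r+1]|\ge r$. Otherwise $G$ would miss two elements $i_1,i_2\in[r+1]$, and the $r$ sets $G, F_{i_3},\ldots,F_{i_{r+1}}$ (with $\{i_3,\ldots,i_{r+1}\}=[r+1]\setminus\{i_1,i_2\}$) would already have empty intersection on $[r+1]$; if the $F_i$ can additionally be chosen small enough outside $[r+1]$, their global intersection is empty, contradicting $r$-wise intersecting. This containment yields $|\ff|\le |\ff^*|$, which is the target bound.

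The main obstacle is precisely controlling the tail of the $F_i$'s: an upset may contain only sets with long tails outside $[r+1]$, so compression alone cannot force $F_i\subseteq[r+1]$, and the naive trace argument stalls. The standard remedy is an inductive reduction on $n$: if every admissible witness family must share some common tail element $k\in[r+2,n]$, one exploits that structural obstruction to peel $k$ off and recurse on $[n]\setminus\{k\}$; otherwise the $F_i$ can be pruned to lie in $[r+1]$ and the trace argument closes directly. The shifting/non-triviality interaction in the second step is a secondary technical subtlety, familiar from the shifted-family literature and addressable by a small case analysis around the boundary shifts.
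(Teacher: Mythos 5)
The paper does not actually prove the Brace--Daykin theorem; it only cites \cite{BD} and points to \cite{Fra3} for a short proof. So the comparison here is between your proposal and the known arguments.

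Your proposal has a genuine gap: the central trace claim, that after shifting every $G\in\ff$ satisfies $|G\cap[r+1]|\ge r$ (equivalently $\ff\subseteq\ff^*=\bb(1,r)$), is simply false, and no amount of compression or induction can rescue it, because $\bb(1,r)$ is extremal only in cardinality, not in the containment order. Concretely, for $r=3$ take
$$\ff=\bigl\{F\subseteq[n]:|F\cap[7]|\ge 5\bigr\}.$$
This family is an initial (shifted) up-set, it is non-trivial, and it is $3$-wise intersecting (three members miss at most $2+2+2=6<7$ elements of $[7]$). Yet $\{3,4,5,6,7\}\in\ff$ while $|\{3,4,5,6,7\}\cap[4]|=2<3$, so $\ff\not\subseteq\ff^*$. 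Its size, $29\cdot 2^{n-7}$, is below the Brace--Daykin bound $40\cdot 2^{n-7}=\tfrac{5}{16}2^n$, so it is not a counterexample to the theorem --- only to your strategy. The failure is exactly the obstacle you flag (``long tails outside $[r+1]$''), but your proposed remedy of peeling off a common tail element $k$ and recursing does not apply: in the example above there is no forced common tail element among the witnesses; the problem is that a member $G$ itself may legitimately have small trace on $[r+1]$ while compensating with a larger trace on a longer initial segment.

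The correct route (cf.\ \cite{Fra3}, and compare Proposition~\ref{prop11} of this paper) replaces the single containment by a stratification: for a shifted non-trivial $r$-wise intersecting family, every $F$ satisfies $|F\cap[r\ell+1]|\ge(r-1)\ell+1$ for some $\ell\ge 1$ (non-triviality is what forces $\ell\ge 1$, i.e.\ rules out $1\in F$ for all $F$), and one then bounds the number of sets in each stratum and sums over $\ell$, the $\ell=1$ term contributing the main term $(r+2)2^{n-r-1}$. Your set-up (upset reduction, shifting, extraction of witnesses $F_i$ with $F_i\cap[r+1]=[r+1]\setminus\{i\}$, and the correct count $|\ff^*|=(r+2)2^{n-r-1}$) is sound, but the heart of the argument as written does not close.
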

Since $r+2<2^r$ for $r\ge 3$ and $(r+2)2^{-r}\to 0$ as $r$ tends to infinity, \eqref{eqnew2} is much stronger than \eqref{eqnew1}. The following example shows that it is best possible for $n\ge r+1$.
$$\bb(1,r):=\{B\subset [n]:|B\cap [r+1]\ge r\}.$$
Let us mention that for $n\le r$ there is no non-trivial $r$-wise intersecting family. For a simple proof of \eqref{eqnew2} cf. \cite{Fra3}.

\begin{defn}
  For a family $\ff\subset 2^{[n]}$ and an arbitrary integer $r\ge 2$ let $t(\ff,r)$ denote the largest integer $t$ such that $|F_1\cap\ldots \cap F_r|\ge t$ for all $F_1,\ldots, F_r\in \ff$.
\end{defn}
One can easily check that $t(\ff,r+1)\le \max\{0,t(\ff,r)-1\}$ for non-trivial families. Therefore, $t(\ff,2)\ge 2$ for every non-trivial $3$-wise intersecting family $\ff$. On the other hand, we believe that assuming $t(\ff,2)\ge 3$ leads to stronger bounds on the size of the family.

\begin{gypo}\label{conj3}
  Suppose that $\ff\subset 2^{[n]}$ is both $3$-wise $1$-intersecting and $2$-wise $3$-intersecting. Then \begin{equation}\label{eqnew3}
         |\ff|\le 2^{n-2}.
       \end{equation}
\end{gypo}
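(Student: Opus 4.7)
The plan is an induction on $n$ via a link argument at a single element, combined with a structural analysis that uses both intersection hypotheses. Note that the Brace--Daykin bound~\eqref{eqnew2} applied with $r=3$ only gives $|\ff|\le \tfrac{5}{8}2^{n-1}$, so the $2$-wise $3$-intersecting assumption has to be used to shave off the remaining factor of $5/4$. As standard reductions, $(i,j)$-shifting preserves both the $3$-wise $1$-intersecting and the $2$-wise $3$-intersecting properties while leaving $|\ff|$ unchanged, and taking the upward closure preserves both hypotheses and can only increase $|\ff|$. I would therefore assume throughout that $\ff$ is shifted and an upset.

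Next, I would fix the element $1$ and set $\aaa=\{F\setminus\{1\}:1\in F\in\ff\}$ and $\bb=\{F\in\ff:1\notin F\}$, both viewed as families on $[2,n]$. The upset property gives $\bb\subseteq\aaa$ and $|\ff|=|\aaa|+|\bb|$. Since $\bb$ still satisfies both hypotheses on the $(n{-}1)$-element ground set $[2,n]$, the inductive hypothesis yields $|\bb|\le 2^{n-3}$, and the rest of the proof is devoted to establishing $|\aaa|\le 2^{n-3}$.

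The family $\aaa$ inherits only the weaker $2$-wise $2$-intersecting condition on $[2,n]$, but one additionally has the cross-condition $|F\cap G\cap H|\ge 1$ for every $F\in\aaa$ and every pair $G,H\in\bb$ (apply the $3$-wise hypothesis on $\ff$ to $F\cup\{1\},G,H$). I would split into two regimes. When $|\bb|$ is close to the extremal value $2^{n-3}$, an Ahlswede--Khachatrian-type stability statement forces $\bb$ to sit essentially inside a fixed structure (roughly of the form $\{G\subseteq[2,n]:|G\cap S|\ge |S|-1\}$ for a small $S$), and the cross-condition then squeezes $\aaa$ into the same frame and delivers $|\aaa|\le 2^{n-3}$. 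When $|\bb|$ is small, the missing $3$-wise information has to come from triples inside $\aaa$ itself, and together with $2$-wise $2$-intersecting plus a Kruskal--Katona/shadow bound on the shifted family this should again yield $|\aaa|\le 2^{n-3}$.

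The main obstacle is the intermediate regime, and more fundamentally the weakness of the $2$-wise $3$-intersecting hypothesis: unlike its $t$-intersecting analogue for large $t$, the $3$-intersecting condition admits a rich collection of near-extremal configurations, so the stability step is quantitatively much weaker. This is presumably why the authors can prove the bound for $t=38$, where stability is already sufficient to kill the intermediate regime, but have to leave the sharp value $t=3$ as a conjecture. A full proof will likely need either a new local swapping argument that bypasses stability entirely, or a two-parameter induction that tracks the size of the kernel $\bigcap_{F\in\ff}F$ alongside $n$.
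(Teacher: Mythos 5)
This statement is Conjecture~\ref{conj3} of the paper: the authors explicitly state that they were unable to prove it, establish only the weaker Theorem~\ref{thmnew1} (with $3$ replaced by $38$), and separately show that Conjecture~\ref{conj9} would imply it. So there is no proof in the paper to compare against, and the question is whether your proposal closes the gap. It does not; it is a research plan whose central intermediate goal is already false.

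Concretely, after reducing to a shifted upset and splitting $\ff$ into $\aaa=\ff(1)$ and $\bb=\ff(\bar 1)$, you propose to prove $|\aaa|\le 2^{n-3}$ and $|\bb|\le 2^{n-3}$ separately. But the paper's own example $\T(n,2)=\{\{1\}\cup T: T\subset[2,n],\ |T|\ge\frac{n+1}{2}\}$ is a shifted upset that is $3$-wise intersecting and $2$-wise $3$-intersecting, has $\bb=\emptyset$, and has $|\aaa|=(1-o(1))2^{n-2}$, so the target $|\aaa|\le 2^{n-3}$ cannot be established. The correct statement must be a tradeoff between $|\aaa|$ and $|\bb|$, and in fact the case $\bb=\emptyset$ (equivalently, $\ff$ trivial) is the \emph{easy} one, disposed of in one line in the introduction by noting that $\ff(1)$ is $2$-wise intersecting on $[2,n]$; all of the difficulty lies in the non-trivial case, where Brace--Daykin only gives $\frac54\cdot 2^{n-2}$. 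Beyond this, the two substantive steps of your plan --- an Ahlswede--Khachatrian/Katona-type stability theorem for $2$-wise $3$-intersecting families strong enough to control $\aaa$ via the cross-condition, and the treatment of the ``intermediate regime'' --- are not carried out, and you acknowledge yourself that the stability available for $t=3$ is too weak (this is precisely why the authors prove the result only for $t=38$, via a partition of $2^{[w]}$ with $w=54$ and the cross-intersecting inequality \eqref{eqnew7}, rather than by stability). As written, the proposal neither proves the conjecture nor reproduces the paper's argument for the weaker theorem.
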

If $\ff$ is trivial, e.g., if $1\in F$ for all $F\in \ff$, then the $2$-wise $3$-intersecting property implies that $\ff(1):=\{F\setminus \{1\}: F\in\ff\}\subset 2^{[2,n]}$ is $2$-wise intersecting. Applying \eqref{eqnew1} to $\ff(1)$ yields
$$|\ff|=|\ff(1)|\le 2^{n-2}.$$
This shows that in proving \eqref{eqnew3} one might assume that $\ff$ is non-trivial. From \eqref{eqnew2} we obtain $|\ff|\le \frac 5{16}2^n = \frac 54 \cdot 2^{n-2}$, which falls short of \eqref{eqnew3}.

\textbf{Example. } Let $t\ge 2$ be a fixed integer and suppose for convenience that $n>t$, $n+t$ is odd. Define
$$\T(n,t):=\Big\{\{1\}\cup T: T\subset [2,n], |T|\ge \frac {n-1+t}2\Big\}.$$
\begin{cla} The following hold:
\begin{itemize}
  \item[(i)] $\T(n,t)$ is $3$-wise intersecting and $2$-wise $(t+1)$-intersecting.
  \item[(ii)] $|\T(n,t)| = \sum_{i\ge \frac{n-1+t}2}{n-1\choose i} = (1-o(1))2^{n-2}$ as $n\to \infty$.
\end{itemize}
\end{cla}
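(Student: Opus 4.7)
The plan is to verify each statement directly from the definition, since the construction is essentially transparent. First, the parity hypothesis $n+t$ odd ensures $(n-1+t)/2$ is an integer, so the threshold in the definition of $\T(n,t)$ is sharp.

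For part (i), the 3-wise intersecting property is immediate: every member of $\T(n,t)$ contains the element $1$, so the intersection of any collection of members contains $1$ and is nonempty. For the 2-wise $(t+1)$-intersecting property, I would take two members $F_j=\{1\}\cup T_j$ with $T_j\subset[2,n]$ and $|T_j|\ge (n-1+t)/2$ for $j=1,2$, and apply inclusion–exclusion inside the $(n-1)$-element universe $[2,n]$:
$$|T_1\cap T_2|\ge |T_1|+|T_2|-(n-1)\ge 2\cdot\frac{n-1+t}{2}-(n-1)=t.$$
Adding the common element $1$ yields $|F_1\cap F_2|\ge t+1$, as required.

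For part (ii), the size formula is just the definition: $\T(n,t)$ is in bijection with subsets of $[2,n]$ of size at least $(n-1+t)/2$, giving $|\T(n,t)|=\sum_{i\ge (n-1+t)/2}\binom{n-1}{i}$. To obtain the asymptotics, I would compare this sum with the symmetric tail $\sum_{i\ge (n-1)/2}\binom{n-1}{i}$, which equals $2^{n-2}$ up to at most half the central binomial coefficient. The two tails differ by at most $t/2$ terms, each bounded by $\binom{n-1}{\lfloor(n-1)/2\rfloor}=O(2^{n-1}/\sqrt{n})$; since $t$ is fixed while $n\to\infty$, this discrepancy is $o(2^{n-2})$, and hence $|\T(n,t)|=(1-o(1))2^{n-2}$.

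There is no real obstacle here: both parts follow by direct counting, with the only mildly delicate point being the asymptotic estimate in (ii), which is handled by a standard comparison of binomial tails near the median.
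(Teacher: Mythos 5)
Your proof is correct; the paper simply leaves this claim to the reader, and your argument (all sets share the element $1$; inclusion--exclusion in $[2,n]$ gives $|T_1\cap T_2|\ge t$; comparison of the binomial tail at $(n-1+t)/2$ with the symmetric tail, losing only $O(t)$ central terms of size $O(2^{n-1}/\sqrt n)$ each) is exactly the intended routine verification.
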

We leave the easy proof to the reader.
This claim shows that even for $t$ large one cannot expect something much smaller than $2^{n-2}$.

We were unable to prove Conjecture~\ref{conj3}, but established \eqref{eqnew3} with $3$ replaced by $38$.
\begin{thm}~\label{thmnew1}
  Suppose that $\ff\subset 2^{[n]}$ is $3$-wise intersecting and $2$-wise $38$-intersecting. Then \eqref{eqnew3} holds.
\end{thm}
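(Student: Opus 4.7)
My plan is: shift $\ff$, handle the trivial case by \eqref{eqnew1}, and in the non-trivial case split by containment of element~$1$ and run a cross-intersection analysis in $2^{[2,n]}$.

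First, I reduce to the case that $\ff$ is a shifted up-set. The standard shift $S_{ij}$ preserves both the $3$-wise $1$-intersecting and the $2$-wise $38$-intersecting properties, and so does upward closure (adjoining a superset of an existing set can only enlarge any fixed intersection). If $\ff$ is trivially intersecting, say $1\in F$ for all $F\in \ff$, then the link $\ff(1):=\{F\setminus\{1\}:F\in\ff\}$ is $2$-wise $37$-intersecting in $2^{[2,n]}$, hence $2$-wise intersecting, and \eqref{eqnew1} applied to $\ff(1)$ gives $|\ff|=|\ff(1)|\le 2^{n-2}$.

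In the non-trivial case, set $\aaa=\ff(1)$ and $\bb=\{F\in\ff:1\notin F\}$, both viewed in $2^{[2,n]}$. The up-set assumption implies $\bb\subseteq \aaa$ and $|\ff|=|\aaa|+|\bb|$. The hypotheses translate to: $\aaa$ is $2$-wise $37$-intersecting; $\bb$ is $3$-wise $1$-intersecting and $2$-wise $38$-intersecting; and the cross conditions $|A\cap B|\ge 38$ and $A_1\cap A_2\cap B\ne \emptyset$ hold for all $A, A_1, A_2\in \aaa$ and $B\in \bb$. Non-triviality forces $\bb\ne \emptyset$; fix a witness $B_0\in \bb$. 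The $3$-wise cross condition makes the trace family $\{A\cap B_0:A\in \aaa\}$ a $2$-wise intersecting subfamily of $2^{B_0}$, and the $2$-wise cross condition forces every trace to have at least $38$ elements. Partitioning $\aaa$ by its trace on $B_0$ and applying \eqref{eqnew1} inside $2^{B_0}$ already gives $|\aaa|\le 2^{n-2}$.

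The crux is to absorb the extra $|\bb|$, obtaining the improved bound $|\aaa|+|\bb|\le 2^{n-2}$. I plan to exhibit, for each $B\in \bb$, a distinct complementary pair in $2^{[2,n]}$ missed entirely by $\aaa$, so that the pairing argument proving $|\aaa|\le 2^{n-2}$ loses at least $|\bb|$ of its capacity. The main obstacle is making this refinement quantitatively tight: a naive complementation only uses $B\in \aaa\Rightarrow B^c\notin \aaa$ and yields the trivial $|\aaa|+|\bb|\le 2^{n-1}$, off by a factor of two. The specific constant $38$ should enter through a sharper counting step using the $2$-wise $38$ cap within $\bb$ and between $\bb$ and $\aaa$: sets in $\bb$ project onto $B_0$ in size at least $38$, which, via a Kruskal--Katona or binomial-tail bound, blocks a nontrivial fraction of otherwise available traces. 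Making this trade-off close uniformly in $|B_0|$, perhaps via iterating the split to further elements $2,3,\ldots$ or via an auxiliary induction on $n$, will be the core technical difficulty.
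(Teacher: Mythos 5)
Your reduction to a shifted up-set, your treatment of the trivial case via \eqref{eqnew1}, and your derivation of $|\aaa|\le 2^{n-2}$ (partitioning $\ff(1)$ by its trace on a fixed $B_0\in\bb$ and using that the trace family is intersecting in $2^{B_0}$) are all correct. But this only reproves the bound $|\ff|\le |\aaa|+|\bb|\le 2|\aaa|\le 2^{n-1}$ that you yourself label as trivial, plus the clean statement $|\ff(1)|\le 2^{n-2}$. The entire content of the theorem is the step you defer: showing that $|\aaa|+|\bb|\le 2^{n-2}$, i.e.\ that $\aaa$ loses at least $|\bb|$ beyond the factor-of-two saving already extracted. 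For this you offer only a plan (``exhibit, for each $B\in\bb$, a distinct complementary pair missed by $\aaa$''), with no mechanism for producing these pairs, no argument that they are distinct across different $B$, and no concrete place where the constant $38$ enters. Note that when $|\bb|$ is a positive fraction of $2^{n-2}$ you need a constant-factor improvement over the pairing bound for $\aaa$, not a boundary correction, so this is not a routine tightening; it is essentially Conjecture~\ref{conj3} in disguise, which the authors state they cannot prove even with the hypothesis $38$.

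For comparison, the paper's proof takes a different route entirely and its engine for ``absorbing the extra $|\bb|$'' is the shadow inequality: for a $3$-wise $2$-intersecting family $\G$ one has $|\sigma(\G)|\ge 2|\G|$ (Corollary~\ref{cor10}, via Proposition~\ref{prop11} and Theorem~\ref{thm12}). The argument splits on whether a specific balanced set $H_0$ lies in $\ff$. If it does, $\ff$ is partitioned by $F\cap[54]$ and bounded using Brace--Daykin together with a Fibonacci-type estimate \eqref{eq10}. If it does not, $2^{[7]}$ is decomposed into $64$ ``big triples'' $(A,B,C)$ with $A\cap B\cap C=\emptyset$, the three restricted families are shown to be cross-intersecting after applying $\sigma$, and the three-family inequality \eqref{eqnew7} combined with $|\sigma(\ff(C))|\ge 2|\ff(C)|$ yields $|\ff(A)|+|\ff(B)|+2|\ff(C)|\le 2^{n-7}$; summing over the $64$ triples gives $2|\ff|\le 2^{n-1}$. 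That doubling of the third term is exactly the quantitative gain your sketch is missing, and nothing in your complementation plan plays its role. As it stands the proposal has a genuine gap at the decisive step.
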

A family $\ff\subset 2^{[n]}$ is called an {\it up-set} if for all $F\in\ff$, $F\subset H\subset [n]$ implies $H\in \ff$.
Every family generates a unique up-set containing it. Moreover, if it is $r$-wise $t$-intersecting then the same holds for the corresponding up-set. Therefore, unless otherwise stated, we shall tacitly assume that the families we consider are up-sets.

Let us mention that the Katona Theorem~\cite{Kat64} determines the maximum size $k(n,t)$ of $2$-wise $t$-intersecting families for all $n\ge t\ge1$. The construction is analogous to $\T(n,t)$ and shows $$k(n,t)=(1-o(1))2^{n-1}\text{ for }t\text{ fixed and }n\to \infty.$$
That is, for each of the two intersecting properties from Theorem~\ref{thmnew1}, we have a lower bound of the form $(1+o(1))2^{n-1}$ for the largest size of the family satisfying the property. By the lemma of Kleitman \cite{Kl66}, two up-sets $\ff_1,\ff_2\subset 2^{n}$ of sizes $2^{n-\alpha_1},2^{n-\alpha_2}$, respectively, satisfy $|\ff_1\cap \ff_2|\ge2^{n-\alpha_1-\alpha_2}$. This immediately gives us a lower bound of $(1+o(1))2^{n-2}$ for the largest size of the family satisfying the conditions of Theorem~\ref{thmnew1}. Thus, one may say that, in a sense, $3$-wise intersecting and $2$-wise $t$-intersecting properties are as incompatible for large families as any two monotone increasing properties may be.

For a family $\ff$, let $\partial (\ff)$ be its {\it immediate shadow}:
$$\partial \ff:=\{G:\exists F\in \ff, G\subset F, |F\setminus G|=1\}.$$
Define also $\sigma(\ff):=\ff\cup \partial \ff$.

It is important to note that $[n]\in \ff$ for every non-empty up-set $\ff\subset 2^{[n]}$. This implies ${[n]\choose n-1}\subset \partial\ff$ whence both $\partial \ff$ and $\sigma(\ff)$ are non-trivial.

\begin{gypo}\label{conj9}
Suppose that $\ff\subset 2^{[n]}$ is $3$-wise intersecting. Then
\begin{equation}\label{eqnew4}
  |\sigma(\ff)|\ge2|\ff|.
\end{equation}
\end{gypo}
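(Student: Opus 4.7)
The plan is to recast the desired inequality as $|\partial\ff\setminus\ff|\ge|\ff|$: since $\ff$ is an up-set one has $\ff\subset\sigma(\ff)$, so $|\sigma(\ff)|-|\ff|=|\partial\ff\setminus\ff|$. As a first reduction, it is enough to prove the claim for shifted $\ff$: each shift $s_{ij}$ preserves the up-set and $3$-wise intersecting properties, fixes $|\ff|$, and does not increase the immediate shadow $|\partial\ff|$, hence does not increase $|\sigma(\ff)|$. In the trivial case $\bigcap_{F\in\ff}F\ne\emptyset$, say $1\in F$ for every $F\in\ff$, the map $F\mapsto F\setminus\{1\}$ embeds $\ff$ injectively into $\partial\ff\setminus\ff$, giving the inequality with equality and matching the trivial extremal family.

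For the non-trivial case I would proceed by induction on $n$, assuming the bound for all smaller ground sets. Split $\ff=\ff_1\sqcup\ff_{\bar 1}$ according to whether $1\in F$ and set $\ff(1):=\{F\setminus\{1\}:F\in\ff_1\}\subset 2^{[2,n]}$. The subfamily $\ff_{\bar 1}$ is itself a $3$-wise intersecting up-set on $[2,n]$, so by the inductive hypothesis $|\sigma(\ff_{\bar 1})|\ge 2|\ff_{\bar 1}|$ inside $2^{[2,n]}$. The decisive structural input from shiftedness is the inclusion $\partial\ff_{\bar 1}\subset\ff(1)$: for $G\cup\{x\}\in\ff_{\bar 1}$ with $x\ne 1$ and $1\notin G$, the identity $s_{1x}(\ff)=\ff$ forces $G\cup\{1\}\in\ff$, hence $G\in\ff(1)$. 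Together with $\ff_{\bar 1}\subset\ff(1)$ (up-set), this yields $\sigma(\ff_{\bar 1})\subset\ff(1)$ and in particular $|\ff_1|=|\ff(1)|\ge 2|\ff_{\bar 1}|$. Moreover, the portion of $\partial\ff\setminus\ff$ not containing $1$ is exactly $\ff(1)\setminus\ff_{\bar 1}$, contributing $|\ff_1|-|\ff_{\bar 1}|$ sets.

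The portion of $\partial\ff\setminus\ff$ containing $1$ is, via $G\mapsto G\setminus\{1\}$, in bijection with $\partial\ff(1)\setminus\ff(1)$ inside $2^{[2,n]}$. Adding the two contributions, the target $|\partial\ff\setminus\ff|\ge|\ff_1|+|\ff_{\bar 1}|$ reduces to showing $|\partial\ff(1)\setminus\ff(1)|\ge 2|\ff_{\bar 1}|$. This is the main obstacle: the link $\ff(1)$ is only guaranteed to be $2$-wise intersecting, and the inductive hypothesis of Conjecture~\ref{conj9} does not apply to $2$-wise intersecting families in general (e.g.\ on $[3]$ the family $\{\{1,2\},\{1,3\},\{2,3\},[3]\}$ has $|\sigma|=7<8=2|\ff|$). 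What \emph{does} hold for $\ff(1)$ is a cross-$3$-wise intersecting property: for any $G_1,G_2\in\ff(1)$ and any $F\in\ff_{\bar 1}$ one has $G_1\cap G_2\cap F\ne\emptyset$ on $[2,n]$ (since $G_1\cup\{1\},G_2\cup\{1\},F$ is a triple in $\ff$ and $1\notin F$). My plan would be to leverage this cross-$3$-wise structure, together with the inclusion $\sigma(\ff_{\bar 1})\subset\ff(1)$ and the inductive estimate $|\sigma(\ff_{\bar 1})|\ge 2|\ff_{\bar 1}|$, to force $|\partial\ff(1)\setminus\ff(1)|\ge 2|\ff_{\bar 1}|$, probably via a correlation-type or compression argument that uses the fact that $\ff(1)\setminus\sigma(\ff_{\bar 1})$ also satisfies strong intersecting conditions against $\ff_{\bar 1}$. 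Producing such a bound on the boundary of an up-set that is only $2$-wise intersecting, relying on cross-conditions rather than a direct $3$-wise hypothesis on the link, is exactly where the standard tools run out of steam and is what I expect to be the main difficulty.
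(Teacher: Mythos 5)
First, a point of orientation: the statement you are proving is Conjecture~\ref{conj9}, which the paper explicitly leaves open --- there is no proof of it anywhere in the text, only the observation that it would imply Conjecture~\ref{conj3}. The closest the paper comes is Corollary~\ref{cor10}, which establishes $|\sigma(\ff)|>2|\ff|$ under the stronger hypothesis that $\ff$ is $3$-wise \emph{$2$-}intersecting, by combining the trace condition of Proposition~\ref{prop11} with the shadow estimate of Theorem~\ref{thm12}; that route breaks down at $t=1$ because the corresponding condition $|F\cap[3\ell+1]|\ge 2\ell+1$ does not force $|\partial\ff|\ge 2|\ff|$ (already $\ff=\{F:1\in F\}$ has $|\partial\ff|=2|\ff|-1$, and equality in \eqref{eqnew4} is attained only via $\sigma$). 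So there is no ``paper proof'' to match your attempt against, and your attempt does not close the conjecture either.

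That said, your reductions are correct and worth recording. The passage to shifted families is legitimate ($s_{ij}$ preserves the up-set and $3$-wise intersecting properties and satisfies $\sigma(s_{ij}(\ff))\subset s_{ij}(\sigma(\ff))$, hence does not increase $|\sigma(\ff)|$); the trivial case is handled correctly; and in the non-trivial case the identity $|\partial\ff\setminus\ff|=(|\ff_1|-|\ff_{\bar 1}|)+|\partial\ff(1)\setminus\ff(1)|$, together with the shiftedness consequence $\sigma(\ff_{\bar 1})\subset\ff(1)$ and the inductive bound $|\ff_1|\ge 2|\ff_{\bar 1}|$, correctly reduces \eqref{eqnew4} to the single inequality $|\partial\ff(1)\setminus\ff(1)|\ge 2|\ff_{\bar 1}|$. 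But that inequality is precisely where the argument stops, and the gap is genuine rather than cosmetic: $\ff(1)$ is only $2$-wise intersecting, your own example on $[3]$ shows that $2$-wise intersecting up-sets can violate $|\sigma(\cdot)|\ge 2|\cdot|$, and the cross-$3$-wise-intersecting property of $(\ff(1),\ff(1),\ff_{\bar 1})$ that you propose to exploit is not accompanied by any mechanism for converting it into a lower bound on the new shadow of $\ff(1)$. As it stands, the proposal is an honest and correct reduction of one open problem to another open problem, not a proof; if you want an unconditional statement in this direction, prove the weaker Corollary~\ref{cor10} instead, or restrict to $3$-wise $2$-intersecting families where Proposition~\ref{prop11} and Theorem~\ref{thm12} apply.
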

In the next section we show that Conjecture~\ref{conj9} implies Conjecture~\ref{conj3}.

\section{Preliminaries}

There is a natural partial order $A\prec B$ defined for sets of the same size. Suppose that $A = \{a_1,\ldots, a_p\}$, $B=\{b_1,\ldots,b_p\}$ are distinct sets with $a_1<\ldots <a_p$ and $b_1<\ldots <b_p$. We write $A\prec B$ iff $a_i\le b_i$ for all $1\le i\le p$.
\begin{defn}
  The family $\ff\subset 2^{[n]}$ is called {\it initial} if $A\prec B$ and $B\in \ff$ imply $A\in \ff$.
\end{defn}
Extend the above partial order to $2^{[n]}$ by putting $A\prec B$ if $B\subset A$. We call this order the {\it shifting/inclusion order}.
Erd\H os, Ko and Rado \cite{EKR} defined an operation on families of sets (called {\it shifting}) that maintains the $r$-wise $t$-intersecting property (cf. \cite{Fra3} for the proof). Since repeated application of shifting always produces an initial family, we shall always assume that the families in question are initial.

\begin{prop}[\cite{F11}]\label{prop11}
  If $\ff\subset 2^{[n]}$ is $3$-wise $t$-intersecting and initial, then, for every $F\in \ff$, there exists an integer $\ell\ge 0$  such that \begin{equation}\label{eqnew5}
                                   |F\cap [3\ell+t]|\ge 2\ell+t.
                                 \end{equation}
\end{prop}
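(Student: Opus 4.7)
My plan is to prove the proposition by contradiction. Assume some $F \in \ff$ satisfies $|F \cap [3\ell + t]| \leq 2\ell + t - 1$ for every $\ell \geq 0$. Writing $F = \{f_1 < \cdots < f_m\}$, this is equivalent to $f_{t+2\ell} \geq t + 3\ell + 1$ whenever $t + 2\ell \leq m$, which together with monotonicity of the $f_k$ also gives $f_{t+2\ell-1} \geq t + 3\ell - 1$ and $f_{t+2\ell-2} \geq t + 3\ell - 2$ for $\ell \geq 1$. I will construct three sets $F_1, F_2, F_3 \in \ff$ with $|F_1 \cap F_2 \cap F_3| \leq t-1$, contradicting the $3$-wise $t$-intersecting property.

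Write $n = t + 3L + r$ with $r \in \{0,1,2\}$ and partition $\{t+1,\dots,t+3L\}$ into consecutive triples $I_\ell := \{t+3\ell-2,\,t+3\ell-1,\,t+3\ell\}$ for $1 \leq \ell \leq L$. Define $F_i := [n] \setminus H_i$, where
\[
H_1 := \{t\} \cup \{t+3\ell : 1 \leq \ell \leq L\}, \quad H_2 := \{t+3\ell-1 : 1 \leq \ell \leq L\}, \quad H_3 := \{t+3\ell-2 : 1 \leq \ell \leq L\},
\]
with the tail $[t+3L+1,n]$ absorbed into $H_2$ and $H_3$ when $r \geq 1$. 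Each triple $I_\ell$ contributes exactly one element to each $H_i$, and the singleton $\{t\}$ goes into $H_1$; therefore $H_1 \cup H_2 \cup H_3 = [t,n]$ and $F_1 \cap F_2 \cap F_3 = [1,t-1]$, of size exactly $t-1$.

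The essential step is to verify $F_i \in \ff$. Since $\ff$ is initial, it suffices to exhibit $G_i \subseteq F_i$ with $|G_i| = m$ and $G_i \prec F$ in the shifting order; then $G_i \in \ff$, hence $F_i \in \ff$ by the up-set property. I take $G_i$ to be the $m$ smallest elements of $F_i$ and verify $(G_i)_k \leq f_k$ interval by interval: for $G_1$ the two elements coming from $I_\ell$ occupy positions $t+2\ell-2$ and $t+2\ell-1$ with values $t+3\ell-2$ and $t+3\ell-1$, matching the lower bounds on $f_{t+2\ell-2}$ and $f_{t+2\ell-1}$ exactly; for $G_2, G_3$ they occupy positions $t+2\ell-1$ and $t+2\ell$, with worst-case values $t+3\ell-1$ and $t+3\ell$, still below $f_{t+2\ell-1}$ and $f_{t+2\ell}$ respectively. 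The main obstacle is this asymmetric bookkeeping across the three $G_i$'s --- the three constructions are not interchangeable, so each must be checked separately --- plus the tail accommodation when $r \in \{1,2\}$, which adds one or two extra top-end positions in $G_2, G_3$ handled by the same monotonicity argument.
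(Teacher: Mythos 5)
The paper does not prove Proposition~\ref{prop11}; it is quoted from \cite{F11}, so there is no in-paper argument to compare against. Your proof is correct and is essentially the classical shifting argument from that reference: encode the failure of \eqref{eqnew5} as lower bounds on the order statistics $f_{t+2\ell}$, build three sets whose complements partition $[t,n]$ via the residue classes of the triples $I_\ell$, and use initiality to pull each into $\ff$, contradicting $3$-wise $t$-intersection. One small simplification: the sets $G_1,G_2,G_3$ already satisfy $G_1\cap G_2\cap G_3\subseteq[t-1]$ and each $G_i\prec F$ in the same-size order, so you can work with them directly and skip the final up-set step producing the $F_i$.
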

%Let $\ell(F)$ denote the maximal integer $\ell$ satisfying \eqref{eqnew5} and $3\ell+t\le n$.
The following result is proven in \cite{F5}.
\begin{thm}[\cite{F5}]\label{thm12}
  Suppose that $\ff\subset 2^{[n]}$ is such that for any $F\in \ff$ we have $|F\cap [3\ell+2]|\ge 2\ell+2$ for some $\ell\ge 0$. Then
  \begin{equation}\label{eqnew6} |\partial(\ff)|\ge 2|\ff|.
  \end{equation}
\end{thm}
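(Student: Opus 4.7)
The plan is to exhibit an injection $\ff \times \{1,2\} \hookrightarrow \partial(\ff)$, from which the desired inequality $|\partial(\ff)| \ge 2|\ff|$ is immediate. First I would reduce to the case where $\ff$ is initial in the shifting order: shifting preserves the hypothesis (replacing a later element of any $F \in \ff$ by an earlier one only increases $|F \cap [3\ell+2]|$ for every $\ell$) and cannot increase $|\partial(\ff)|$. Then for each $F \in \ff$, set $\ell(F) := \min\{\ell \ge 0 : |F \cap [3\ell+2]| \ge 2\ell+2\}$ and stratify $\ff = \bigsqcup_{\ell \ge 0} \ff_\ell$ by minimal witnessing level.

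The structural starting point is that each stratum $\ff_\ell$ has a common ``anchor set'' contained in every one of its members. For $\ell = 0$ the hypothesis directly gives $\{1,2\} \subset F$. For $\ell \ge 1$, the minimality of $\ell(F) = \ell$ forces $|F \cap [3\ell-1]| \le 2\ell-1$; combining this with $|F \cap [3\ell+2]| \ge 2\ell+2$ yields $|F \cap \{3\ell, 3\ell+1, 3\ell+2\}| \ge 3$, so $\{3\ell, 3\ell+1, 3\ell+2\} \subset F$. The natural assignment is then: to each $F$ assign the two shadow elements obtained by removing one of the two elements of its anchor pair (if $\ell = 0$), or two of the three elements of the anchor triple (if $\ell \ge 1$). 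Within any single stratum, this pair of maps is easily checked to be injective with disjoint images, giving the per-stratum bound $|\partial(\ff_\ell)| \ge 2|\ff_\ell|$.

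The principal obstacle is that shadow elements produced in different strata may coincide, so summing the per-stratum bounds does not directly yield the theorem. For example, $\{1,2,3,5\} \setminus \{1\} = \{2,3,4,5\} \setminus \{4\}$, and both source sets can appear simultaneously in a shifted family $\ff$ (indeed the first is precisely the shift-image of the second). The heart of the proof therefore lies in a global collision-resolution step: one must either refine the choice of the two shadow elements using finer invariants of $F$ than just $\ell(F)$, so that the stratum of the source set can be recovered from the shadow (for instance by tracking $|G \cap \{1,2\}|$ or the position of gaps in initial segments), or one must set up a bipartite assignment between $\ff \times \{1,2\}$ and $\partial(\ff)$ whose feasibility is verified via a Hall-type expansion inequality that exploits the shifted structure. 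I expect this cross-stratum collision analysis to be the main technical difficulty of the argument; once a globally consistent injection is in place, the bound $|\partial(\ff)| \ge 2|\ff|$ follows at once.
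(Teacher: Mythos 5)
Your reduction to shifted families, the stratification by the minimal witnessing level $\ell(F)$, and the identification of the anchor sets ($\{1,2\}\subset F$ when $\ell(F)=0$, and $\{3\ell,3\ell+1,3\ell+2\}\subset F$ when $\ell(F)=\ell\ge 1$) are all correct, and the per-stratum injectivity check is sound. But the proof is not complete: the cross-stratum collision problem that you flag in your last paragraph is exactly the content of the theorem, and neither of the two fixes you gesture at is actually carried out. Your own example shows the difficulty is real: $\{1,2,3,5\}$ (stratum $0$) and $\{2,3,4,5\}$ (stratum $1$) are forced to coexist in any initial family containing the latter, and both produce the shadow set $\{2,3,5\}$ by removal of an anchor element; no a priori rule for picking two of the three anchor elements in stratum $1$ avoids such coincidences, and the stratum of the source cannot be recovered from the shadow set alone. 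What you have established is only $|\partial(\ff_\ell)|\ge 2|\ff_\ell|$ for each stratum separately, and these bounds do not sum.

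The actual argument of \cite{F5} (a version of which, for a slightly different normalization, appears in the discarded draft at the end of this source file) sidesteps the collision problem by a different choice at the very first step: one takes $\ell(F)$ to be the \emph{maximal} $\ell$ with $|F\cap[3\ell+2]|\ge 2\ell+2$ and restricts the shadow of $F$ to the sets $F\setminus\{x\}$ with $x\in[3\ell(F)+2]$. Maximality yields $|F\cap[3\ell'+2]|\le 2\ell'+1$ for every $\ell'>\ell(F)$, hence every restricted shadow set $G$ of such an $F$ has $|G\cap[3\ell'+2]|\le 2\ell'$, whereas every restricted shadow set of a member of stratum $\ell'$ meets $[3\ell'+2]$ in at least $2\ell'+1$ elements; so restricted shadows of distinct strata are disjoint for trace-size reasons and no cross-stratum injection is ever needed. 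The price is that with the maximal $\ell$ there is no anchor triple, so the per-stratum bound is proved instead by fixing the tail $F\setminus[3\ell+2]$ and running a normalized local LYM (Kruskal--Katona) count over the levels of $[3\ell+2]$. As written, the key step of your proof is missing; you should either reproduce the argument just sketched or cite \cite{F5} for it.
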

\begin{cor}\label{cor10}
  Suppose that $\emptyset\ne \ff\subset 2^{[n]}$ is $3$-wise $2$-intersecting. Then $\sigma(\ff)>2|\ff|$.
\end{cor}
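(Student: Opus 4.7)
The plan is to string together Proposition~\ref{prop11} (with $t=2$) and Theorem~\ref{thm12}, and then boost the resulting bound on $|\partial\ff|$ into the claimed bound on $|\sigma(\ff)|=|\ff\cup\partial\ff|$ via one small combinatorial observation about the top element $[n]$.

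First I would invoke the paper's standing reduction: by repeated shifting we may assume that $\ff$ is initial while preserving its size and the $3$-wise $2$-intersecting property. Proposition~\ref{prop11} applied with $t=2$ then tells us that for every $F\in\ff$ there exists $\ell\ge 0$ with $|F\cap[3\ell+2]|\ge 2\ell+2$. This is precisely the hypothesis of Theorem~\ref{thm12}, so we conclude
\begin{equation*}
|\partial\ff|\ge 2|\ff|.
\end{equation*}

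The second step is to pass from this shadow bound to $\sigma(\ff)$. By inclusion-exclusion,
\begin{equation*}
|\sigma(\ff)|=|\ff|+|\partial\ff|-|\ff\cap\partial\ff|\ge 3|\ff|-|\ff\cap\partial\ff|,
\end{equation*}
so it suffices to exhibit a single set in $\ff\setminus\partial\ff$. Since $\ff$ is a nonempty up-set, $[n]\in\ff$; on the other hand $[n]$ has no proper superset in $[n]$, so $[n]\notin\partial\ff$. Hence $|\ff\cap\partial\ff|\le|\ff|-1$ and $|\sigma(\ff)|\ge 2|\ff|+1>2|\ff|$, as required.

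There is no real obstacle here: all the heavy lifting sits inside Proposition~\ref{prop11} and Theorem~\ref{thm12}. The only subtlety is obtaining the \emph{strict} inequality rather than $|\sigma(\ff)|\ge 2|\ff|$, and this is precisely what the presence of $[n]$ in $\ff\setminus\partial\ff$ buys us. If one wanted to be pedantic about the initial-family reduction, one could observe that shifting can only weakly decrease $|\partial\ff|$, so proving the strict bound for the shifted family is in fact stronger than needed for the original.
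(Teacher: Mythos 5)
Your proof is correct and follows essentially the same route as the paper's: apply Proposition~\ref{prop11} with $t=2$ to verify the hypothesis of Theorem~\ref{thm12}, obtain $|\partial\ff|\ge 2|\ff|$, and then use $[n]\in\ff\setminus\partial\ff$ to upgrade this to the strict inequality for $\sigma(\ff)$. The inclusion--exclusion phrasing of the last step is only a cosmetic difference from the paper's one-line conclusion.
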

\begin{proof}
  Proposition~\ref{prop11} implies that $\ff$ satisfies the conditions of Theorem~\ref{thm12}. Now the statement follows from \eqref{eqnew6} and $[n]\notin \partial \ff$.
\end{proof}
\begin{defn}
  Suppose that $\aaa,\bb,\ccc\subset 2^{[n]}$ satisfy $|A\cap B\cap C|\ge t$ for all $A\in \aaa, B\in\bb$ and $C\in\ccc$. Then we say that $\aaa, \bb,\ccc$ are {\it cross-$t$-intersecting}.
\end{defn}
Let us recall the following recent result.
\begin{thm}[\cite{F10}]
  Suppose that $\aaa,\bb,\ccc\subset 2^{[n]}$ are non-trivial and cross-$1$-intersecting. Then
  \begin{equation}\label{eqnew7} |\aaa|+|\bb|+|\ccc|<2^n.
  \end{equation}
\end{thm}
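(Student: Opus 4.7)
The plan is to shift and then induct on $n$, with the crucial structural observation that our hypotheses force \emph{pairwise} cross-$1$-intersection. First, apply shifting simultaneously to $\aaa,\bb,\ccc$; sizes, the up-set property, cross-$1$-intersection, and non-triviality are all preserved, so we may assume each is an initial up-set, and non-triviality then forces $[2,n]\in\aaa\cap\bb\cap\ccc$. If $A\cap B=\emptyset$ for some $A\in\aaa, B\in\bb$, picking any $C\in\ccc$ (nonempty by non-triviality) gives $A\cap B\cap C=\emptyset$, a contradiction. Hence each of the three pairs among $\aaa,\bb,\ccc$ is itself cross-$1$-intersecting.

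The baseline bound is the pairing inequality: if $\aaa',\bb'\subset 2^{[n]}$ are cross-$1$-intersecting then $|\aaa'|+|\bb'|\le 2^n$, since for each unordered pair $\{S,S^c\}$, the events $\{S\in\aaa',S^c\in\bb'\}$ and $\{S^c\in\aaa',S\in\bb'\}$ are forbidden and so $\{S,S^c\}$ contributes at most $2$ to $|\aaa'|+|\bb'|$; summed over the $2^{n-1}$ pairs. Applied to the three pairs among $\aaa,\bb,\ccc$ this gives $|\aaa|+|\bb|+|\ccc|\le\tfrac{3}{2}\cdot 2^n$, falling short of the target by a factor of $3/4$, which the genuinely $3$-wise condition plus non-triviality must supply.

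I would extract the missing factor by induction on $n$. Split each family by the element $n$: set $\aaa_0:=\{A\in\aaa:n\notin A\}\subset 2^{[n-1]}$ and $\tilde\aaa:=\{A\setminus\{n\}:A\in\aaa,\,n\in A\}\subset 2^{[n-1]}$; then $|\aaa|=|\aaa_0|+|\tilde\aaa|$, $\aaa_0\subseteq\tilde\aaa$ (up-set), and $[n-1]\in\aaa_0$ (non-triviality). Define $\bb_0,\tilde\bb,\ccc_0,\tilde\ccc$ analogously. Three kinds of conditions descend to $[n-1]$: (i) $(\aaa_0,\bb_0,\ccc_0)$ is cross-$1$-intersecting; (ii) each mixed triple such as $(\aaa_0,\tilde\bb,\tilde\ccc)$ is cross-$1$-intersecting; (iii) fixing $C_0\in\ccc_0$ in (ii) shows $(\tilde\aaa,\tilde\bb)$ is cross-$1$-intersecting (and similarly for the other two pairs). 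Provided $(\aaa_0,\bb_0,\ccc_0)$ is non-trivial on $[n-1]$, the inductive hypothesis supplies $|\aaa_0|+|\bb_0|+|\ccc_0|<2^{n-1}$, and it remains to prove $|\tilde\aaa|+|\tilde\bb|+|\tilde\ccc|\le 2^{n-1}$.

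This last inequality is the main obstacle: the pairwise bound from (iii) on $[n-1]$ yields only $\tfrac{3}{2}\cdot 2^{n-1}$, so the extra factor must come from the genuinely $3$-wise content of (ii) --- namely, that for every $A\in\tilde\aaa$ and $B\in\tilde\bb$, the intersection $A\cap B$ is a transversal of the family $\ccc_0$, and analogously for the two other permutations. Turning these transversal constraints into an additive saving on $|\tilde\aaa|+|\tilde\bb|+|\tilde\ccc|$, together with handling the degenerate cases in which some of $\aaa_0,\bb_0,\ccc_0$ reduces to $\{[n-1]\}$ (so the inductive hypothesis is unavailable and one must reduce to the two-family cross-intersecting bound), are the main technical hurdles.
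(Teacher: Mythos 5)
First, a point of reference: the paper does not prove this statement at all --- it is imported as a black box from \cite{F10} (``to appear''), so there is no internal proof to compare your route against. Your submission therefore has to stand on its own, and as it stands it is an outline, not a proof. The parts you actually carry out are correct but routine: simultaneous shifting preserves cross-intersection and (for up-sets) non-triviality, since a non-trivial up-set contains every co-singleton $[n]\setminus\{x\}$ and these survive every shift --- you should say this, as it is exactly the step that fails for uniform intersecting families; the pairing bound $|\aaa'|+|\bb'|\le 2^n$ for a cross-intersecting pair is standard; and the descent of conditions (i)--(iii) to $[n-1]$ is immediate. None of this uses the genuinely three-wise hypothesis, and as you note it only yields $\tfrac32\cdot 2^n$.

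The entire content of the theorem is concentrated in the two steps you explicitly leave open: proving $|\tilde{\aaa}|+|\tilde{\bb}|+|\tilde{\ccc}|\le 2^{n-1}$ from the ``transversal'' constraints, and handling the cases where $(\aaa_0,\bb_0,\ccc_0)$ (or, in any symmetrized variant of the induction, one of the tilde families) fails to be non-trivial so that the inductive hypothesis cannot be invoked. The first of these is not a technical afterthought: the triple $(\tilde{\aaa},\tilde{\bb},\tilde{\ccc})$ satisfies weaker hypotheses than the original one (it need not be cross-$1$-intersecting as a triple, and its members need not be non-trivial), so the inequality you need for it is essentially as hard as \eqref{eqnew7} itself, and you give no mechanism for extracting the additive saving. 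The second is the classical boundary difficulty in all Hilton--Milner/Brace--Daykin type inductions and typically requires a separate structural analysis of near-trivial families. Until both are supplied, this is a plausible plan of attack rather than a proof.
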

The reason for our interest in $\partial \ff$ and $\sigma(\ff)$ is explained by the following simple statement.
\begin{obs}
  If $\aaa,\bb,\ccc\subset 2^{[n]}$ are cross-$t$-intersecting, $t\ge 2$, then $\sigma(\aaa), \bb,\ccc$ are cross-$(t-1)$-intersecting.
\end{obs}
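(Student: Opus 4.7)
The plan is to verify the cross-intersecting condition element-by-element: given any $A'\in\sigma(\aaa)$, $B\in\bb$, $C\in\ccc$, we must show $|A'\cap B\cap C|\ge t-1$. Split into the two cases corresponding to the definition $\sigma(\aaa)=\aaa\cup\partial\aaa$.

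In the first case, $A'\in\aaa$, and the conclusion is immediate from the hypothesis that $\aaa,\bb,\ccc$ are cross-$t$-intersecting: $|A'\cap B\cap C|\ge t\ge t-1$ (using $t\ge 2\ge 1$).

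In the second case, $A'\in\partial\aaa$, so by definition of $\partial$ there exists $A\in\aaa$ with $A'\subset A$ and $|A\setminus A'|=1$; write $A=A'\cup\{x\}$ with $x\notin A'$. Then $A\cap B\cap C\subset (A'\cap B\cap C)\cup\{x\}$, hence
\[
|A'\cap B\cap C|\;\ge\;|A\cap B\cap C|-1\;\ge\;t-1,
\]
where the last inequality uses the cross-$t$-intersecting hypothesis applied to the triple $(A,B,C)$.

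There is no real obstacle here; the statement is essentially a bookkeeping observation about how dropping a single element from a set in $\aaa$ can shrink a triple intersection by at most one. This is precisely the feature that will later be exploited to trade intersection strength for the extra factor of $2$ produced by passing from $\ff$ to $\sigma(\ff)$.
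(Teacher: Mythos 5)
Your proof is correct and is exactly the intended argument; the paper states this as an unproved Observation precisely because it reduces to the one-line fact that deleting a single element from $A\in\aaa$ shrinks $|A\cap B\cap C|$ by at most one. The case split over $\sigma(\aaa)=\aaa\cup\partial\aaa$ and the use of $t\ge 2$ to keep the resulting intersection size at least $1$ are both handled properly, so there is nothing to add.
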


We finish this section with a short proof of the fact that Conjecture~\ref{conj9} implies Conjecture~\ref{conj3}.
\begin{proof}[Conjecture~\ref{conj9} implies Conjecture~\ref{conj3}] Consider $\ff$ as in the statement of Conjecture~\ref{conj3}. Then $\sigma(\ff)$ is $2$-wise intersecting, and thus $|\sigma(\ff)|\le 2^{n-1}$. Therefore, by Conjecture~\ref{conj9}, $|\ff|\le \frac 12 |\sigma(\ff)|\le 2^{n-2}$.
\end{proof}

\section{Proof of Theorem~\ref{thmnew1}}
Consider a shifted family $\ff\subset 2^{[n]}$ as in the statement of Theorem~\ref{thmnew1}. For $S\subset [s]$, define
$$\ff(S,[s]):=\{F\setminus S: F\in\ff, F\cap [s]=S\}.$$

We consider two cases depending on whether the subsets not containing $1$ have a strong or weak presence in $\ff$. As a criterion, let us fix the set
$$H_0:=\{[2,8]\cup \{10,11,13,14,16,17\ldots\}\cup [n].$$
Note that for all $t$, $3\le t\le n/3$,
\begin{equation}\label{eq8}
  |H_0\cap [3t]|=2t+1.
\end{equation}
\textbf{Case 1. $H_0\in\ff$. } Put $w:=54$. We are going to partition $\ff$ according to $F\cap [w]$.
Set $\tilde H:=H_0\cap [w]$ and define
$$\mathcal G_0:=\{G\subset [w]:|G|\ge 33, G\ne \tilde H\}.$$
It is easy to verify by computer-aided computation that
\begin{equation}\label{eq9}
  |\mathcal G_0|<\frac 1{13}2^{w}.
\end{equation}
Define $T_0:=[w+1,w+7]\cup \{w+9,w+10,w+12,w+13,\ldots\}\cap [n]$.
Now we can define
$$\mathcal G_1:=\{G\subset [w]: G\notin \mathcal G_0, T_0\notin \mathcal F(G,[w])\}.$$
Here we invoke an old result of the first author \cite[Lemma 2]{F11} which asserts that for any $G\in \mathcal G_1$
\begin{equation}\label{eq10}
  |\mathcal F(G,[w])|<\Big(\frac{\sqrt 5-1}2\Big)^8 2^{n-w}<\frac 1{46} 2^{n-w}.
\end{equation}
Finally, set $\mathcal G_2:=2^{[w]}\setminus (\mathcal G_0\cup \mathcal G_1)$. By construction, the $37$-element set $\tilde H$ is in $\mathcal G_2$. Below we are going to prove the following.
\begin{prop}\label{prop13}
$\mathcal G_2$ is $3$-wise intersecting.
\end{prop}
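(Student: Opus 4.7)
The plan is to argue by contradiction: assume there exist $G_1, G_2, G_3 \in \mathcal G_2$ with $G_1 \cap G_2 \cap G_3 = \emptyset$. By definition of $\mathcal G_2$, each $F_i := G_i \cup T_0$ belongs to $\ff$, so Proposition~\ref{prop11} (with $t = 1$) provides an integer $\ell_i \ge 0$ with $|F_i \cap [3\ell_i + 1]| \ge 2\ell_i + 1$.

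The first step is to use the structure of $T_0$ to bound $\ell_i$. Since $T_0 \supseteq [w+1, w+7]$ and then has density $2/3$ above $w+7$ (missing $w+8, w+11, w+14, \ldots$), a direct computation shows that every $\ell \ge 18$ forces $|G_i| \ge 34$; for instance $\ell = 18$ needs $|G_i| + 1 \ge 37$, $\ell = 19$ needs $|G_i| + 4 \ge 39$, and for $\ell \ge 20$ one gets $|G_i| \ge 34$ from the density of $T_0$. Because each $G \in \mathcal G_2 \setminus \{\tilde H\}$ has $|G| \le 32$, for such $G_i$ the minimal admissible $\ell_i$ lies in $\{0, 1, \ldots, 17\}$ and the condition reduces to the purely internal statement $|G_i \cap [3\ell_i + 1]| \ge 2\ell_i + 1$ inside $[w]$. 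For $G_i = \tilde H$ one may take $\ell_i = 1$, since $|\tilde H \cap [4]| = |\{2,3,4\}| = 3$.

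Choosing $\ell_i$ minimal, the fact that $\ell_i - 1$ fails gives $|G_i \cap [3\ell_i - 2]| \le 2\ell_i - 2$, and subtracting from $|G_i \cap [3\ell_i + 1]| \ge 2\ell_i + 1$ forces $\{3\ell_i - 1, 3\ell_i, 3\ell_i + 1\} \subseteq G_i$ (with the degenerate case $\ell_i = 0$ giving $1 \in G_i$). If all three $\ell_i$ take a common value $\ell$, then the complements $[3\ell + 1] \setminus G_i$ each have size at most $\ell$ and together have size at most $3\ell < 3\ell + 1$; they therefore cannot cover $[3\ell + 1]$, so $G_1 \cap G_2 \cap G_3 \cap [3\ell + 1] \ne \emptyset$, contradicting the hypothesis.

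The main obstacle is the case of unequal $\ell_i$. Here I would exploit the shifted structure of $\mathcal G_2$: for any $G \in \mathcal G_2$ of size at most $32$ and any shift-predecessor $G' \prec G$ with $|G'| = |G|$, one still has $G' \in \mathcal G_2$, because $G \cup T_0 \in \ff$ and $\ff$ is initial force $G' \cup T_0 \in \ff$ as well. This flexibility should allow one either to realign the $\ell_i$ (reducing to the uniform case above) or to directly produce a common element in the forced triples $\{3\ell_i - 1, 3\ell_i, 3\ell_i + 1\}$. The technically delicate step is to choose such shifts so that they remain compatible with the empty-triple-intersection assumption, and this is presumably where the specific numerics $w = 54$, $|\tilde H| = 37$, and the take-two-skip-one pattern of $T_0$ enter in an essential way.
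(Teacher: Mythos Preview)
Your argument has a genuine gap: the case of unequal $\ell_i$ is explicitly left unfinished, and the outline you sketch for it cannot be completed along those lines. The condition extracted from Proposition~\ref{prop11} --- that each $G_i$ satisfies $|G_i\cap[3\ell_i+1]|\ge 2\ell_i+1$ for some $\ell_i\le 17$ --- is only a \emph{necessary} condition for $G_i\cup T_0$ to lie in a shifted $3$-wise intersecting family; it does not encode $3$-wise intersection. A shifted family can satisfy this density condition for every member and still fail to be even $2$-wise intersecting (e.g.\ the shifted closure of $\{\{1\},\{2,3,4\}\}$). So no amount of shifting inside $\mathcal G_2$, using only this individual density information, will force $G_1\cap G_2\cap G_3\neq\emptyset$. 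What you have discarded is precisely the $3$-wise intersecting property of $\ff$ itself, which is where the contradiction must come from.

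The paper's proof bypasses Proposition~\ref{prop11} entirely. It works directly with the three sets $F\cup T_0$, $G\cup T'$, $H\cup T''\in\ff$, where $T',T''$ are shifts of $T_0$ chosen so that $T_0\cap T'\cap T''=[w+1,w+7]$. Since $F\cap G\cap H=\emptyset$ inside $[w]$, the only common elements of the three big sets are these seven. The numerics $|F|+|G|+|H|\le 3\cdot 33<2\cdot 54-7$ guarantee enough vacant positions in $[w]$ to shift each $w+i$ down into one of $F,G,H$ without creating a new common element, producing three members of $\ff$ with empty intersection --- the desired contradiction. The case $H=\tilde H$ is handled by the same mechanism with a count of $5$ instead of $7$. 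This is where $w=54$ and the take-two-skip-one structure of $T_0$ actually enter: not through Proposition~\ref{prop11}, but through a direct counting-plus-shifting argument on the tails.
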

Let us first show how Proposition~\ref{prop13} implies $|\ff|<2^{n-2}$. First note that the pairwise $38$-intersecting property and $|G|\le 37$ for all $G\in \mathcal G_2$ imply that for any $G\in \mathcal G_2$ the family $\mathcal F(G,[w])$ is $2$-wise intersecting. Consequently, $|\mathcal F(G,[w])|\le \frac 12 2^{n-w}$.

Partition $\ff$ according to $F\cap [w]$: $\ff_i:=\{F\in\ff:F\cap [w]\in \mathcal G_i\}$. We have \begin{equation}\label{eq11} |\ff|=|\ff_0|+|\ff_1|+|\ff_2|=|\mathcal G_0|\cdot 2^{n-w}+|\mathcal G_1|\cdot \frac 1{46} 2^{n-w}+|\mathcal G_2|\cdot \frac 12 2^{n-w}.
\end{equation}
By $\tilde H\in\mathcal G_2$ and Proposition~\ref{prop13}, we may apply the Brace--Daykin Theorem and infer
\begin{equation}\label{eq12}
  |\mathcal G_2|\le \frac 5{16} 2^{n-w}.
\end{equation}
Since the coefficient in front of $|\mathcal G_1|$ is the smallest, we get an upper bound for the RHS for \eqref{eq11} by making $|\mathcal G_0|=\frac 1{13} 2^{w}$, $|\mathcal G_2|=\frac 5{16} 2^{w}$ and $|\mathcal G_1|=\big(1-\frac 1{13}-\frac 5{16}\big)2^w$. We obtain
$$|\ff|\le \Big(\frac 1{13}+\frac 5{32}+\big(1-\frac 1{13}-\frac 5{16}\big)\frac 1{46}\Big)2^n<2^{n-2},$$
as desired.

\begin{proof}[Proof of Proposition~\ref{prop13}]
Take first $F,G,H\in \mathcal G_2\setminus \{H_0\}$ and suppose that $F\cap G\cap H=\emptyset$. By definition, $T_0\in \mathcal F(S,[w])$ for $S=F,G$ and $H$. Using shiftedness, we can obtain that \begin{align*}T':=[w+1,w+7]\cup \{w+8,w+10,w+11,w+13,\ldots\}&\in \mathcal F(G,[w])\ \ \ \text{and}\\
T'':=[w+1,w+7]\cup \{w+8,w+9,w+11,w+12,\ldots\}&\in \mathcal F(H,[w]).
\end{align*}
The intersection of $T',T''$ and $T_0$ is $[w+1,w+7]$. Since $|F|+|G|+|H|\le 3\cdot 33=99<2\cdot 54-7$, for each $i\in[7]$ we can replace $w+i$ with an element $[w]$ in one of $F\cup T_0, G\cup T', H\cup T''$ and strictly decrease the common intersection of the three sets. Repeating it for each $i\in[7]$, by shiftedness we get that there are three sets in $\ff$ that have empty common intersection, a contradiction.

Now suppose that $H=\tilde H=H_0\cap [w]$. Then $\mathcal F(H,[w])$ contains $H':=H\cap [w+1,n]=\{w+1,w+2,w+4,w+5,w+7,w+8,\ldots\}$. Taking $T_0\in \mathcal F(F,[w])$ and $T''\in \mathcal F(G,[w])$, respectively, we get that $T''\cap T_0\cap H'=\{w+1,w+2,w+4,w+5,w+7\}$. To arrive at the same contradiction, we shift these $5$ elements into $[w]$, decreasing the intersection of $T_0\cup F, T''\cup G$ and $H_0$ after each shift. Since $|F|+|G|+|\tilde H|\le 33+33+37=2\cdot 54-5,$ this is possible.
\end{proof}

\textbf{Case 2. $H_0\notin\ff$. } This condition implies that, for all $S\subset[2,7]$ and $F\in \ff(S,[7])$,  there exists $\ell$ such that
\begin{equation}\label{eqinters}
                             |F\cap [8,3\ell+9]|\ge 2\ell+2.
\end{equation}
Indeed, it is true for $S=[2,7]$ since $H_0\cap [8,n]$ is the unique maximal set in the shifting/inclusion order that does not have this property, and for $S'\subset S$ we have $\ff(S,[7])\supset \ff(S',[7])$. The equations \eqref{eqinters} and \eqref{eqnew6}, in turn, imply that, for each $S\subset [2,7]$, we have $|\partial(\ff(S,[7]))|\ge 2 |\ff(S,[7])|$.

For a two-element set $\{x_i,y_i\}$, let us consider the following four ordered triplets:
\begin{alignat*}{4}
  &(\emptyset, && \{x_i\}, &&\{x_i,y_i\}&&), \\
  &(\{x_i\}, && \{y_i\}, &&\{y_i\}&&), \\
  &(\{y_i\}, && \{x_i,y_i\},\ && \emptyset&&), \\
  &(\{x_i,y_i\},\ &&\emptyset, &&\{x_i\}&&).
\end{alignat*}
Note that all four subsets of $\{x_i,y_i\}$ occur once in each position (column). Also, the sum of  sizes of the subsets in each triplet is always $3$ and the intersection of the subsets is empty. Suppose that $\{x_1,x_2,x_3,y_1,y_2,y_3\}=[2,7]$ and let $(A_i,B_i,C_i)$, $i\in [3]$, be some of the above triples. We associate with them a {\it big triple}
$$(\{1\}\cup A_1\cup A_2\cup A_3, \{1\}\cup B_1\cup B_2\cup B_3,C_1\cup C_2\cup C_3).$$
Let us note that, for each big triple, the sum of the sizes of the subsets in it is $11$. Altogether, we constructed $4\times 4\times 4=64$ triples, where each subset of $[7]$ containing $1$ appears exactly once in the first and second position and each subset of $[2,7]$ appears exactly once in the third position. Moreover, the intersection of the three subsets is empty for each triple.

For a big triple $(A,B,C)$ we consider the three families $\ff(D):=\ff(D,[7])$, where $D=A,B,$ or $C$. Recall that $\sigma (\ff) = \ff\cup \partial \ff$.
\begin{prop}
  The families $\ff(A),\ \ff(B),\ \ff(C)$ are cross $3$-wise $4$-intersecting. The families $\sigma(\ff(A)),\ \sigma(\ff(B)),\ \sigma(\ff(C))$ are cross $3$-wise intersecting.
\end{prop}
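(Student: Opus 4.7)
The plan is to reduce the second assertion to the first via three successive applications of the Observation preceding the proposition: each application swaps one of the three families for its $\sigma$ and drops the cross-intersection threshold by $1$, so cross-$4$-intersecting $\ff(A),\ff(B),\ff(C)$ yields cross-$1$-intersecting $\sigma(\ff(A)),\sigma(\ff(B)),\sigma(\ff(C))$. The real work lies in the first assertion. First I would fix $F_A\in\ff(A)$, $F_B\in\ff(B)$, $F_C\in\ff(C)$, set $F_A^*:=F_A\cup A$, $F_B^*:=F_B\cup B$, $F_C^*:=F_C\cup C$, all in $\ff$, and observe that since the construction of big triples forces $A\cap B\cap C=\emptyset$ inside $[7]$, one has $F_A^*\cap F_B^*\cap F_C^*=F_A\cap F_B\cap F_C\subseteq[8,n]$. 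The strategy is then to argue by contradiction: assuming $|F_A\cap F_B\cap F_C|\le 3$, I would use shiftedness of $\ff$ to shift all (at most three) threefold-intersection elements out, contradicting that $\ff$ is $3$-wise intersecting.

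The crucial structural input is a rigid degree count on $[7]$. Writing $n_j$ for the number of $i\in[7]$ that lie in exactly $j$ of $A,B,C$, I claim $(n_0,n_1,n_2,n_3)=(0,3,4,0)$ for every one of the $64$ big triples. Indeed $n_3=0$ since $A\cap B\cap C=\emptyset$; the element $1\in A\cap B\setminus C$ has degree $2$; and a direct inspection of the four rows of the table of small triples shows that on each pair $\{x_i,y_i\}\subseteq[2,7]$ exactly one element has degree $1$ and the other has degree $2$. This yields three elements of $[7]$ of degree exactly $1$, which will serve as the ``slots'' for the shifts.

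For the shifts themselves, enumerate the threefold intersection as $\{t_1<\ldots<t_k\}$ with $k\le 3$ and each $t_\ell\ge 8$, pick distinct degree-$1$ elements $s_1,\ldots,s_k\in[7]$, and for each $\ell$ choose $X_\ell\in\{A,B,C\}$ with $s_\ell\notin X_\ell$ (possible since $s_\ell$ lies in exactly one of $A,B,C$). I would then perform the shifts $F_{X_\ell}^*\mapsto F_{X_\ell}^*\setminus\{t_\ell\}\cup\{s_\ell\}$ sequentially. Each is a valid shift in the initial family $\ff$: $s_\ell\le 7<t_\ell$, $s_\ell\notin F_{X_\ell}^*$ originally, and the distinct $s_\ell$'s guarantee that earlier shifts never put $s_\ell$ into the current version of $F_{X_\ell}$. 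In the resulting triple in $\ff^{3}$, each $t_\ell$ is missing from one of the three sets, each $s_\ell$ has had its degree rise from $1$ to $2$, and every other element keeps its original degree $\le 2$; so the threefold intersection is empty, giving the desired contradiction. The step requiring the most care will be pinning down the uniform degree profile $(0,3,4,0)$ — a short but concrete case check over the four rows — after which the shifting is routine. Notably, the $2$-wise $38$-intersecting hypothesis on $\ff$ plays no role in this proposition at all.
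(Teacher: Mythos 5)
Your proof is correct and follows essentially the same route as the paper: identify low-degree elements of $[7]$ with respect to $\{A,B,C\}$ and shift the (at most three) common intersection elements into them, then deduce the second assertion by three applications of the Observation. Your degree count $(n_0,n_1,n_2,n_3)=(0,3,4,0)$ is in fact a sharpening of the paper's dichotomy (the paper also allows the case of one degree-$1$ and one degree-$0$ element, which your computation shows never occurs for these $64$ triples), but the underlying argument is the same.
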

\begin{proof}
    For each triple $(A,B,C)$, either there are three elements in $[2,7]$ that are contained in  only one set among $A,B,C$, or one such element and one element which is not contained in $A\cup B\cup C$. In either case, if $F\in \ff(A)$, $G\in \ff(B)$, $H\in \ff(C)$ satisfy $|F\cap G\cap H|\le 3$, then we can do (at most) three shifts and replace each element that belongs to the intersection in one set with one of the ``low-degree'' elements, thus not creating new common intersection. By shiftedness, we will get $F', G', H'$ that belong to $\ff$ but whose common intersection is empty.

  The second statement obviously follows from the first one.
\end{proof}

Now, if $\ff(D)$ is non-empty then $\sigma(\ff(D)$ is non-trivial, where $D=A,B,C$. In that case, by \eqref{eqnew7}
\begin{equation}\label{ineqfra}
  |\sigma(\ff(A)|+|\sigma(\ff(B))|+|\sigma(\ff(C)|\le 2^{n-7}.
\end{equation}
 On the other hand, if one of the families above is empty, the sum of cardinalities of the two remaining ones is at most $2^{n-7}$ since they are cross-intersecting (due to the $2$-wise $38$-intersecting property). Note that $1\notin C$ implies that $\ff(C)$ is $3$-wise $2$-intersecting. In view of Corollary~\ref{cor10}, we infer $|\sigma(\ff(C))|\ge 2|\ff(C)|$. Consequently,  in all cases we have
\begin{equation*}
  |\ff(A)|+|\ff(B)|+2|\ff(C)|\le 2^{n-7}.
\end{equation*}

Summing over the $64$ big triples gives $2|\ff| = 2\sum_{D\subset [7]}|\ff(D)|\le 64 \cdot 2^{n-7}$, that is, $|\ff|\le 2^{n-2}$.

\end{document}